\renewcommand{\@begintheorem}[2]{\it \trivlist
      \item[\hskip \labelsep{\bf #1\ #2{\rm :}}]}
\renewcommand{\@opargbegintheorem}[3]{\it \trivlist
      \item[\hskip \labelsep{\bf #1\ #2\ {\rm (#3)\/:}}]}
\def\@sect#1#2#3#4#5#6[#7]#8{\ifnum #2>\c@secnumdepth
     \def\@svsec{}\else
     \refstepcounter{#1}\edef\@svsec{\csname the#1\endcsname{.}\hskip 1em }\fi
     \@tempskipa #5\relax
      \ifdim \@tempskipa>\z@
        \begingroup #6\relax
          \@hangfrom{\hskip #3\relax\@svsec}{\interlinepenalty \@M #8\par}
        \endgroup
       \csname #1mark\endcsname{#7}\addcontentsline
         {toc}{#1}{\ifnum #2>\c@secnumdepth \else
                      \protect\numberline{\csname the#1\endcsname}\fi
                    #7}\else
        \def\@svsechd{#6\hskip #3\@svsec #8\csname #1mark\endcsname
                      {#7}\addcontentsline
                           {toc}{#1}{\ifnum #2>\c@secnumdepth \else
                             \protect\numberline{\csname the#1\endcsname}\fi
                       #7}}\fi
     \@xsect{#5}}
\newcommand{\Delete}[1]{}
\theoremstyle{plain}
\newtheorem{Thm}{Theorem}
\newtheorem{Lem}[Thm]{Lemma}
\begin{document}

\title{The niche graphs of doubly partial orders}

\author{\begin{tabular}{c}
{\sc Suh-Ryung KIM}
\thanks{This work was supported by the Korea Research Foundation Grant
funded by the Korean Government (MOEHRD) (KRF-2008-531-C00004).}
\qquad {\sc Jung Yeun LEE}
\addtocounter{footnote}{-1}\footnotemark%
\qquad {\sc Boram PARK}
\thanks{The author was supported by Seoul Fellowship.}\ \thanks{corresponding author: kawa22@snu.ac.kr ; borampark22@gmail.com} \\
[1ex]
\small{Department of Mathematics Education,
Seoul National University, Seoul 151-742, Korea.} \\
\\
{\sc Won Jin PARK}\\
[1ex]
\small{Department of Mathematics,
Seoul National University, Seoul 151-742, Korea.} \\
\\
{\sc Yoshio SANO}
\thanks{The author was supported by JSPS Research Fellowships
for Young Scientists.
The author was also supported partly by Global COE program
``Fostering Top Leaders in Mathematics".}\\
[1ex]
\small{Research Institute for Mathematical Sciences,
Kyoto University, Kyoto 606-8502, Japan.}
\end{tabular} }

\date{May 2009}

\maketitle

\begin{abstract}
The competition graph of a doubly partial order is
known to be an interval graph.
The competition-common enemy graph
of a doubly partial order is also known to be an interval graph
unless it contains a cycle of length $4$ as an induced subgraph.
In this paper, we show that the niche graph of a doubly partial order
is not necessarily an interval graph.
In fact, we prove that, for each $n\ge 4$,
there exists a doubly partial order whose niche graph
contains an induced subgraph isomorphic to a cycle of length $n$.
We also show that if the niche graph of a doubly partial order
is triangle-free, then it is an interval graph.
\end{abstract}

\noindent
{\bf Keywords:}
niche graph;
doubly partial order;
interval graph

\newpage
\section{Introduction}

Throughout this paper, all graphs and all digraphs are simple.

Given a digraph $D$, if $(u,v)$ is an arc of $D$,
we call $v$ a {\em prey} of $u$ and $u$ a {\em predator} of $v$. The {\em competition graph} $C(D)$ of a digraph $D$
is the graph which
has the same vertex set as $D$
and has an edge between vertices $u$ and $v$
if and only if there exists a common prey of $u$ and $v$ in $D$.
The notion of competition graph is due to Cohen~\cite{cohen1} and has
arisen from ecology.
Competition graphs also have applications in coding, radio
transmission, and modelling of complex economic systems.
(See \cite{RayRob} and \cite{Bolyai} for a summary of these applications.)
Since Cohen introduced the notion of competition graph,
various variations have been defined
and studied by many authors (see the survey articles by
Kim~\cite{Kim93} and Lundgren~\cite{Lundgren89}).
One of its variants,
the {\em competition-common enemy graph} (or {\em CCE graph})
 of a digraph $D$ introduced by Scott~\cite{sc}
is the graph which
has the same vertex set as $D$ and
has an edge between vertices $u$ and $v$ if and only if
there exist both a common prey and a common predator of $u$ and $v$ in $D$.
Another variant, the {\em niche graph} of a digraph $D$ introduced by
Cable~{\it et al.}~\cite{cable} is the graph which
has the same vertex set as $D$ and
has an edge between vertices $u$ and $v$ if and only if
there exists a common prey or a common predator of $u$ and $v$ in $D$.

A graph $G$ is an {\em interval graph}
if we can assign to
each vertex $v$ of $G$ a real interval $J(v) \subset \mathbb{R}$
such that whenever $v \neq w$,
\[
vw \in E
\text{ if and only if }
J(v) \cap J(w) \neq \emptyset.
\]
The following theorem is a well-known characterization for interval graphs.

\begin{Thm}[\cite{lb}]
A graph is an interval graph if and only if
it is a chordal graph and it has no asteroidal triple.
\label{invervalChara}
\end{Thm}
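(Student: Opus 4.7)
The plan is to prove the two directions of the biconditional separately; I expect the sufficiency direction (chordal and asteroidal-triple-free together imply interval) to be by far the harder one.

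For necessity, assume $G$ has an interval representation $J$. To see that $G$ is chordal, suppose for contradiction that $v_1 v_2 \cdots v_n v_1$ is an induced cycle with $n \ge 4$, and let $v_i$ be the vertex whose interval $J(v_i)$ has the smallest right endpoint. The intervals $J(v_{i-1})$ and $J(v_{i+1})$ both meet $J(v_i)$, and since each has a right endpoint at least as large as that of $J(v_i)$, each must contain the right endpoint of $J(v_i)$. Hence $J(v_{i-1}) \cap J(v_{i+1}) \ne \emptyset$, giving the chord $v_{i-1} v_{i+1}$, a contradiction. To rule out asteroidal triples, let $a, b, c$ be pairwise non-adjacent; their intervals are disjoint, so after relabeling we may assume $J(a)$ lies entirely to the left of $J(b)$, which lies entirely to the left of $J(c)$. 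Any $a$--$c$ path $u_0 = a, u_1, \ldots, u_k = c$ in $G$ yields a sequence of pairwise overlapping intervals starting to the left of $J(b)$ and ending to the right of it, so some $J(u_j)$ must meet $J(b)$, placing $u_j$ in $N[b]$. Thus no $a$--$c$ path avoids $N[b]$, and $\{a,b,c\}$ cannot be an asteroidal triple.

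For sufficiency, I would invoke the clique tree theorem for chordal graphs: every chordal $G$ admits a tree $T$ whose nodes are the maximal cliques of $G$ such that, for each $v \in V(G)$, the maximal cliques containing $v$ induce a subtree of $T$. If $T$ can be chosen to be a path $K^{(1)}, K^{(2)}, \ldots, K^{(m)}$, then assigning to each $v$ the interval of indices $i$ with $v \in K^{(i)}$ yields an interval representation of $G$, since the subtree condition forces those index sets to be contiguous in $\{1, \ldots, m\}$. Sufficiency thus reduces to showing that if $G$ is chordal and asteroidal-triple-free, then some clique tree of $G$ is a path. I would argue by contradiction: fix a clique tree $T$ of $G$ that minimizes, say, the number of nodes of degree at least $3$, and suppose a node $K$ satisfies $\deg_T(K) \ge 3$. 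Choose three distinct neighbors $K_1, K_2, K_3$ of $K$ in $T$, and in each component $B_i$ of $T - K$ containing $K_i$ select a vertex $a_i \in V(G)$ that lies in some maximal clique of $B_i$ but not in $K$. The subtree property then forces any $a_i$--$a_j$ path in $G$ to traverse a sequence of maximal cliques whose projection in $T$ is a connected walk from $B_i$ to $B_j$ through $K$, staying disjoint from $B_k$; this disjointness translates into the path avoiding $N[a_k]$, so $\{a_1, a_2, a_3\}$ is an asteroidal triple, contradicting the hypothesis.

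The main obstacle will be exactly this last step: ensuring the $a_i$ can be chosen pairwise non-adjacent (so that they form a genuine asteroidal triple rather than one trivially disqualified by adjacency), and making rigorous the claim that every $a_i$--$a_j$ path in $G$ actually avoids $N[a_k]$. The non-adjacency may require drawing the $a_i$ from maximal cliques deep inside each branch rather than from $K_i$ itself, which relies on the minimality of $T$ together with chordality to prevent the branches from collapsing into $K$. The path-avoidance claim rests on showing that any $a_i$--$a_j$ walk in $G$ projects, clique by clique, to a walk in $T$ whose traversed cliques lie in $B_i \cup \{K\} \cup B_j$, so that no vertex along the walk is adjacent to an $a_k$ chosen deep inside $B_k$. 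Managing these projections and the attendant minimality arguments is the technical heart of the proof.
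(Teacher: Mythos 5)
This is Theorem~\ref{invervalChara} of the paper, which is not proved there at all: it is the classical Lekkerkerker--Boland characterization, imported verbatim from \cite{lb} and used as a black box. So there is no proof of the authors' to compare yours against; I can only judge your argument on its own terms. Your necessity direction is correct and standard: the ``smallest right endpoint on an induced cycle'' argument for chordality is sound, and the observation that the union of intervals along any $a$--$c$ path is a connected set straddling $J(b)$, so some interval on the path meets $J(b)$, correctly rules out asteroidal triples.

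The sufficiency direction, however, contains a genuine gap, and it sits exactly where you locate ``the technical heart.'' Your key claim --- that with $a_i$ chosen in the branch $B_i$ of $T-K$ but outside $K$, \emph{any} $a_i$--$a_j$ path in $G$ avoids $N[a_k]$ --- is false as stated. A vertex $u$ of $K$ may also lie in a maximal clique inside $B_k$ that contains $a_k$: the subtree property does not forbid this, since the cliques containing $u$ would then form a subtree of $T$ passing through $K$ into $B_k$. Such a $u$ is adjacent to $a_k$, and an $a_i$--$a_j$ path routed through $K$ may be forced to use it. Since the definition of an asteroidal triple only requires that \emph{some} path between each pair avoid the closed neighborhood of the third vertex, you must exhibit a specific path and show it dodges $K\cap N[a_k]$; moreover your proposed fix of taking $a_k$ ``deep inside'' $B_k$ is not always available, because a branch may consist of a single maximal clique all of whose vertices outside $K$ still have neighbors in $K$. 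Handling this (together with the pairwise non-adjacency of the $a_i$, which you also flag) is essentially the entire content of the hard direction of the theorem, so what you have is a correct necessity proof plus a plausible but incomplete plan for sufficiency, not a proof.
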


Cohen~\cite{cohen1, cohen2} observed empirically that
most competition graphs of acyclic digraphs representing food webs
are interval graphs. Cohen's observation and the continued
preponderance of examples that are interval graphs led to a large
literature devoted to attempts to explain the observation and to
study the properties of competition graphs. Roberts~\cite{Rob78}
showed that every graph can be made into the competition graph of an
acyclic digraph by adding isolated vertices. (Add a vertex
$i_{\alpha}$ corresponding to each edge $\alpha=\{a,b\}$ of $G$,
and draw arcs from $a$ and $b$ to $i_{\alpha}$.) He then asked for
a characterization of acyclic digraphs whose competition graphs
are interval graphs.
The study of acyclic digraphs whose competition graphs are
interval graphs led to several new problems and applications (see
\cite{fi, Fraughnaugh,kimrob,lunmayras}).

We introduce some notations for simplicity. A cycle of length $n$
is denoted by $C_n$.
For two vertices $x$ and $y$ in a graph $G$,
we write $x \sim y$ in $G$ when $x$ and $y$ are adjacent in $G$.
For each point $x$ in $\mathbb{R}^2$, we denote its first
coordinate by $x_1$ and the second coordinate by $x_2$.

We define a partial order $\prec$ on $\mathbb{R}^2$ by
\[
x \prec y \text{ if and only if } x_1 < y_1 \mbox{ and } x_2 < y_2.
\]

For $x, y, z \in \mathbb{R}^2$,
$x,y \prec z$ (resp. $x,y \succ z$) means $x \prec
z$ and $y \prec z$ (resp.\ $x \succ z$ and $y \succ z$).
For vertices
$x$ and $y$ in $\mathbb{R}^2$,
we write
\[
\begin{array}{ll}
x \searrow y & \quad \text{ if } x_1 \le y_1 \text{ and } y_2 \le x_2 \\
x \preceq y & \quad \text{ if } x_1 \le y_1 \text{ and } x_2 \le y_2.
\end{array}
\]

A digraph $D$ is called a {\it doubly partial order}
if there exists a finite subset $V$ of $\mathbb{R}^2$ such that
\[
V(D) = V \text{ and } A(D) = \{(v,x) \mid v,x\in V, x \prec v \}.
\]

We may embed each of the competition graph,
the CCE graph, and the niche graph of a
doubly partial order $D$ in $\mathbb{R}^2$ by locating each vertex
at the same position as in $D$. We will always assume that $D$,
its competition graph,
CCE graph, and niche graph are embedded in
$\mathbb{R}^2$ in natural way.

\begin{figure}
\begin{center}
\psfrag{x}{$x$} \psfrag{y}{$y$}
  \includegraphics[width=200pt]{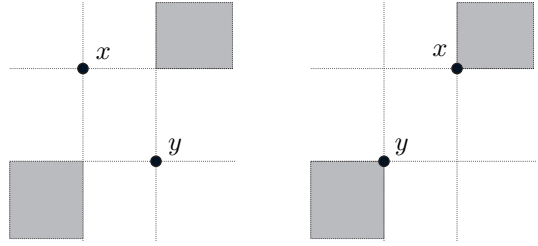}\\
  \caption{The region related to the adjacency of $x$ and $y$}
\label{EIR}
  \end{center}
\end{figure}

For two vertices $x$ and $y$ of a doubly partial order $D$,
if there is a vertex of $D$ in the region
\begin{eqnarray*}
&&
\{ z \in \mathbb{R}^2 \mid z \prec (\min\{x_1,y_1\}, \min\{x_2,y_2\}) \} \\
&\cup&
\{ z \in \mathbb{R}^2 \mid z \succ (\max\{x_1,y_1\}, \max\{x_2,y_2\}) \}
\end{eqnarray*}
(see Figure~\ref{EIR}),
then, by definition, $x$ and $y$ are adjacent in the niche graph of $D$.

The competition graph of a doubly partial order is
an interval graph, and
the CCE graph
of a doubly partial order is also an interval graph
if it is $C_4$-free:

\begin{Thm}[\cite{chokim}]
The competition graph of a doubly partial order is
an interval graph. \label{dpo}
\end{Thm}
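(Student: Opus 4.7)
The plan is to construct an explicit interval representation of $C(D)$. Write $V=V(D)\subset\bR^{2}$. For each $v\in V$ having at least one prey in $D$, I would define
\[
I(v) \;=\; [\ell(v),\, v_{1}),
\]
where $\ell(v):=\min\{z_{1}:z\in V,\ z\prec v\}$ is the smallest first coordinate among the prey of $v$; since every prey satisfies $z_{1}<v_{1}$, this is a nondegenerate half-open interval. For each $v$ with no prey, I would set $I(v)=\{c_{v}\}$ for a distinct constant $c_{v}>\max_{u\in V}u_{1}$, so these singletons are pairwise disjoint and disjoint from every $I(u)$ corresponding to a $u$ with prey.

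A prey-less vertex is isolated in $C(D)$, and its singleton is disjoint from every other interval, so the heart of the argument is to verify, for distinct $u,v\in V$ both having prey, that $u\sim v$ in $C(D)$ if and only if $I(u)\cap I(v)\neq\emptyset$. Assuming WLOG $u_{1}\le v_{1}$ and noting $\ell(u)<u_{1}$ automatically, this equivalence reduces to
\[
u\sim v\ \text{in}\ C(D)\ \Longleftrightarrow\ \ell(v)<u_{1}.
\]
The forward direction is immediate: a common prey $z$ of $u$ and $v$ satisfies $\ell(v)\le z_{1}<u_{1}$. For the reverse, I would choose a prey $z^{*}\prec v$ with $z^{*}_{1}=\ell(v)<u_{1}$ and split on the sign of $u_{2}-v_{2}$. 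If $u_{2}>v_{2}$, then $z^{*}_{2}<v_{2}<u_{2}$ combined with $z^{*}_{1}<u_{1}$ gives $z^{*}\prec u$, so $z^{*}$ is a common prey. If instead $u_{2}\le v_{2}$, any prey $w$ of $u$ satisfies $w_{1}<u_{1}\le v_{1}$ and $w_{2}<u_{2}\le v_{2}$, so $w\prec v$, and $w$ is a common prey.

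The main subtlety I anticipate is handling configurations where coordinates coincide, in particular the case $\ell(v)=u_{1}$: the leftmost prey of $v$ then has the same first coordinate as $u$ and, by the strictness in the definition of $\prec$, is not itself a prey of $u$. A closed interval $[\ell(v),v_{1}]$ would force a spurious singleton intersection $\{u_{1}\}$ in this situation, wrongly asserting adjacency. Opening the right endpoint, as in $[\ell(v),v_{1})$, removes this coincidence and makes the equivalence go through uniformly, without having to perturb the vertex set into general position.
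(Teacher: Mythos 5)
Your proof is correct. Note, however, that the paper does not prove this statement at all: Theorem~\ref{dpo} is quoted from \cite{chokim} and used as a black box, so there is no internal proof to compare against. Your argument stands on its own. The interval assignment $I(v)=[\ell(v),v_1)$ with $\ell(v)=\min\{z_1 : z\prec v\}$ does what you claim: for $u\neq v$ both having prey and $u_1\le v_1$, the intervals meet iff $\max\{\ell(u),\ell(v)\}<\min\{u_1,v_1\}=u_1$, which (since $\ell(u)<u_1$ automatically) is exactly $\ell(v)<u_1$; and your two-case verification of $u\sim v \Leftrightarrow \ell(v)<u_1$ is complete --- the case $u_2>v_2$ uses the leftmost prey of $v$, and the case $u_2\le v_2$ shows any prey of $u$ lies below $v$, which is consistent because it forces $\ell(v)<u_1$ anyway. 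Your remark about the half-open right endpoint is also the right catch: when $\ell(v)=u_1$, every prey of $v$ has first coordinate at least $u_1$ and every prey of $u$ has first coordinate strictly less than $u_1$, so no common prey exists, and closed intervals would create a spurious edge. The handling of prey-less (hence isolated) vertices by far-away singletons is fine. One cosmetic point: if one insists on closed intervals in the definition of interval graph, a finite family of half-open intervals can always be shrunk to closed ones with the same intersection pattern, so nothing is lost.
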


\begin{Thm}[\cite{SJkim}]
The CCE graph of a doubly partial order is an interval graph
unless it contains $C_4$ as an induced subgraph.
\end{Thm}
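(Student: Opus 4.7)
The plan is to invoke Theorem~\ref{invervalChara}: assuming the CCE graph $G$ of a doubly partial order $D$ contains no induced $C_4$, I would establish that $G$ is chordal and has no asteroidal triple, and hence is an interval graph. (The reverse implication is automatic: interval graphs are chordal, and $C_4$ is not chordal.) Throughout, the key geometric device is that an edge $uv$ of $G$ comes with two witnesses, a common prey $p$ in the open region southwest of $(\min\{u_1,v_1\},\min\{u_2,v_2\})$ and a common predator $q$ in the region northeast of $(\max\{u_1,v_1\},\max\{u_2,v_2\})$.

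For chordality, I would suppose for contradiction that there is an induced cycle $v_1 v_2 \cdots v_n v_1$ of length $n \ge 5$ in $G$, with corresponding witnesses $p_i$ and $q_i$ for each edge $v_i v_{i+1}$. I would pick the cycle vertex $v_j$ that is extremal, say having the largest first coordinate (breaking ties by the largest second coordinate). Its two cycle-neighbors $v_{j-1}$ and $v_{j+1}$ then lie weakly to the southwest of $v_j$, while $p_{j-1},p_j$ lie strictly further southwest and $q_{j-1},q_j$ lie strictly northeast of $v_j$. From here I would argue that one of the $p_i$'s or $q_i$'s, or indeed $v_j$ itself, serves as a common prey or common predator for two non-consecutive cycle vertices, yielding a chord that contradicts the inducedness of the cycle; the residual cases where no such chord is forced turn out to produce an induced $C_4$ on four of the cycle vertices, contradicting the $C_4$-freeness hypothesis.

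For the absence of asteroidal triples, given three pairwise non-adjacent vertices $x, y, z$ of $G$, I would compare their coordinates and locate one of them, say $z$, that is ``sandwiched'' between the other two in at least one coordinate direction. Then for any $x$-to-$y$ path in $G$, the southwest or northeast witness of each consecutive edge on the path must lie in a region that is simultaneously a valid prey or predator witness for $z$, forcing some intermediate vertex on the path to be adjacent to $z$ in $G$; hence $N[z]$ meets every $x$-to-$y$ path, ruling out an asteroidal triple.

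The main obstacle I expect is the case analysis inside the chordality step: the geometric positions of the cycle vertices relative to one another and to their witnesses admit several configurations, and the combined use of the induced-cycle property (to exclude would-be chords) and of $C_4$-freeness (to exclude the simplest forbidden pattern) must be orchestrated carefully so that in every configuration one of the two contradictions is reached.
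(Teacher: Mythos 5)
This statement is not proved in the paper at all: it is quoted as background from the reference \cite{SJkim} (Kim--Kim--Rho, \emph{Discrete Appl.\ Math.} \textbf{155} (2007)), so there is no in-paper proof to compare your attempt against. Judged on its own, your proposal is a strategy outline rather than a proof: the overall frame (apply Theorem~\ref{invervalChara}, so show chordality and the absence of asteroidal triples under $C_4$-freeness) is reasonable, and you correctly identify the two witnesses of a CCE edge (a common prey southwest of $(\min\{u_1,v_1\},\min\{u_2,v_2\})$ and a common predator northeast of $(\max\{u_1,v_1\},\max\{u_2,v_2\})$). But both essential steps are asserted, not carried out. The chordality step says that for an induced cycle of length at least $5$ ``one of the $p_i$'s or $q_i$'s \dots serves as a common prey or common predator for two non-consecutive cycle vertices'' or else ``the residual cases turn out to produce an induced $C_4$''; this dichotomy is exactly the theorem's content and is nowhere established.

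The asteroidal-triple step has a concrete flaw beyond incompleteness. Being ``sandwiched in at least one coordinate direction'' does not force every $x$--$y$ path to meet $N[z]$. If $uv$ is an edge of the path with $u_1 \le z_1 \le v_1$, its common prey $p$ satisfies $p_1 < \min\{u_1,v_1\} \le z_1$ but only $p_2 < \min\{u_2,v_2\}$, which can be far above $z_2$ when the path runs through vertices with large second coordinates; symmetrically the common predator $q$ satisfies $q \succ (\max\{u_1,v_1\},\max\{u_2,v_2\})$ and need not dominate $z$. So neither witness is ``simultaneously a valid prey or predator witness for $z$,'' and no adjacency of a path vertex to $z$ is forced. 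To salvage this you would need to exploit the pairwise non-adjacency of $x,y,z$ (and likely $C_4$-freeness) to pin down the relative positions much more tightly, in the spirit of the $\searrow$ analysis of Lemma~\ref{lem1}; as written, the claim on which the AT argument rests is false.
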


It is natural to ask if another important variant of the competition graph,
the niche graph, of a doubly partial order is an interval graph.
In this paper, we show that for each $n\ge 4$,
there is a doubly partial order whose niche graph contains
an induced subgraph isomorphic to $C_n$,
which implies that the niche graph of a doubly partial order
is not necessarily an interval graph.
Then we show that if the niche graph of a doubly partial order
is triangle-free, then it is an interval graph.

\section{Main results}

We will show that the niche graph of a doubly partial order
is not necessarily an interval graph.
We first prove the following lemma.

For $c\in \mathbb{R}$, let $L_c :=\{ v \in \mathbb{R}^2  \mid  v_1+v_2=c \} $ and
$\mathbb{Z}^2:=\{v \in \mathbb{R}^2  \mid v_1,v_2\in \mathbb{Z} \}$.
Given a vertex $v$ in a graph $G$, we denote by $\Gamma_{G}(v)$ the neighborhood of $v$ in $G$.

\begin{Lem}\label{lemCn}
Let $V$ be a finite subset of $\mathbb{R}^2$
satisfying \[ V \cap \mathbb{Z}^2 \subseteq L_c \cup L_{c+2}
\quad \text{ and } \quad    V \setminus {\mathbb{Z}^2}  \subseteq \bigcup_{c< c'<c+2} L_{c'}\]
for some  $c\in \mathbb{R}$.
Suppose that $u_1 +1 \neq v_1$ or $u_2 -1 \neq v_2$ for two vertices $u$, $v$ of $V \cap \mathbb{Z}^2$ with $u_1 \leq v_1$.
Then  $u\not\sim v$ in the niche graph of
the doubly partial order $D$ associated with $V$.
\end{Lem}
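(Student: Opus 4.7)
The plan is a coordinate-sum argument. By the hypotheses on $V$, every vertex $w \in V$ lies in the closed strip $\{w \in \mathbb{R}^2 : c \le w_1 + w_2 \le c+2\}$. Writing $m_i := \min(u_i, v_i)$ and $M_i := \max(u_i, v_i)$ for $i=1,2$, any common prey $w$ of $u$ and $v$ satisfies $w_1 + w_2 < m_1 + m_2$, so to exclude a common prey in $V$ it suffices to show $m_1 + m_2 \le c$; dually, to exclude a common predator it suffices to show $M_1 + M_2 \ge c + 2$. Since the niche graph is loopless, I may assume $u \ne v$, and by hypothesis $u_1 \le v_1$.

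I then split into cases by which of $L_c, L_{c+2}$ contain $u$ and $v$. When $u \in L_c$ and $v \in L_{c+2}$ with $u_2 \le v_2$, one has $m = u$ and $M = v$, so the two target sums equal exactly $c$ and $c+2$. When $u \in L_{c+2}$ and $v \in L_c$, the line equations force $u_2 \ge v_2 + 2$, and a direct substitution gives $m_1 + m_2 = c - (v_1 - u_1) \le c$ and $M_1 + M_2 = c + 2 + (v_1 - u_1) \ge c+2$. The remaining mixed subcase $u \in L_c$, $v \in L_{c+2}$, $u_2 > v_2$ forces $v_1 - u_1 > 2$ from the two line equations, which again yields both bounds.

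The main obstacle, and the only place where the hypothesis enters, is the same-line case $u, v \in L_a$ with $a \in \{c, c+2\}$. Here $u_1 \le v_1$ gives $u_2 \ge v_2$, and one computes $m_1 + m_2 = a - (v_1 - u_1)$ and $M_1 + M_2 = a + (v_1 - u_1)$. One of the two target bounds is automatic, but the other requires $v_1 - u_1 \ge 2$. Since $v_1 - u_1$ is a non-negative integer, the only bad values are $0$ and $1$: the value $0$ contradicts $u \ne v$, while the value $1$ forces $v_2 = a - v_1 = u_2 - 1$, so $v = (u_1 + 1, u_2 - 1)$, which is precisely the configuration excluded by the hypothesis $u_1 + 1 \ne v_1$ or $u_2 - 1 \ne v_2$. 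This closes all cases, so no $w \in V$ is a common prey or common predator of $u$ and $v$, and $u \not\sim v$ in the niche graph of $D$.
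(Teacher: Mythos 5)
Your proof is correct and rests on the same idea as the paper's: every vertex of $V$ has coordinate sum in $[c,c+2]$, while a common prey (resp.\ predator) of $u$ and $v$ would have coordinate sum strictly below $\min\{u_1,v_1\}+\min\{u_2,v_2\}$ (resp.\ strictly above $\max\{u_1,v_1\}+\max\{u_2,v_2\}$), and the hypothesis forces the gap $v_1-u_1\ge 2$ in the only nontrivial (same-line) case. You merely organize it as a direct verification of the two bounds $m_1+m_2\le c$ and $M_1+M_2\ge c+2$, where the paper argues by contradiction with separate prey/predator subcases; the content is the same.
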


\begin{proof}We prove by contradiction.
Suppose that there exist two vertices $u,v\in V \cap \mathbb{Z}^2 $
with $u_1 \le v_1$
such that $u_1+1\not=v_1$ or $u_2-1\not=v_2$ but $u \sim v$ in the niche graph of $D$.
Since $u \sim v$, there exists a vertex $a\in V$
such that either $a \prec u,v$ or $u,v \prec a$.
Since $a\in V$,
\begin{eqnarray} \label{eq1}
&&c \le a_1+a_2 \le c+2.
\end{eqnarray}

Suppose that $\{u,v\} \not\subset L_c$  and $\{u,v\} \not\subset L_{c+2}$.
Then either $u\in L_{c+2}$ and $v\in L_c$, or $u\in L_{c}$ and $v\in L_{c+2}$.
This implies that
\[\min\{u_1+u_2,v_1+v_2 \}=c\quad \text{ and }\quad \max\{u_1+u_2,v_1+v_2\}=c+2.\]
If $a\prec u,v$, then $a_1+a_2<\min\{u_1+u_2,v_1+v_2 \}=c $, which contradicts (\ref{eq1}).
If $u,v \prec a$, then $a_1+a_2> \max\{u_1+u_2,v_1+v_2\}=c+2$, which contradicts (\ref{eq1}) again.
Therefore  either $\{u,v\} \subset L_c$ or $\{u,v\} \subset L_{c+2}$.

Now suppose that $\{u,v\} \subset L_c$.
If $a \prec u,v$, then $a_1+a_2 <u_1+u_2=c$,
which is a contradiction to (\ref{eq1}).
Therefore it must hold that $u,v \prec a$.  Then it is easy to check that
\begin{equation}
a_1+a_2 > v_1+u_2.
\label{eqn}
\end{equation}
Since  $u \neq v$ and $c=u_1+u_2=v_1+v_2$, $u_1\not=v_1$.
By the assumption that $u_1\le v_1$, it is true that $u_1<v_1$.  Since $c=u_1+u_2=v_1+v_2$, $u_2>v_2$.
In addition, from the assumption that  $u_1 + 1 \not= v_1$ or $u_2-1\not=v_2$, we have $v_1-u_1\ge 2$ or $u_2-v_2 \ge 2$.
If $v_1-u_1\ge 2$, then, by (\ref{eqn}), $a_1+a_2>v_1+u_2\ge u_1+u_2+2=c+2$, which contradicts (\ref{eq1}).
If  $u_2-v_2 \ge 2$, then,  by (\ref{eqn}),  $a_1+a_2>v_1+u_2\ge v_1+v_2+2=c+2$, which is a contradiction.
Therefore it must hold that $\{u,v\} \subset L_{c+2}$.

If $u,v \prec a$, then $c+2=u_1+u_2<a_1+a_2$, which is a contradiction to (\ref{eq1}).
Therefore it must hold that $a \prec u,v$. Then
\begin{equation}
a_1+a_2 < u_1+v_2.
\label{eqns}
\end{equation}
Since  $u \neq v$, $u_1 \le v_1$, and $c+2=u_1+u_2=v_1+v_2$, it is true that $u_1<v_1$ and $v_2>u_2$.
Since $u_1 + 1 \not= v_1$ or $u_2-1\not=v_2$,
we have  $v_1-u_1\ge 2$ or $u_2-v_2 \ge 2$.
If $v_1-u_1\ge 2$, then, by (\ref{eqns}),  $a_1+a_2<u_1+v_2\le v_1+v_2-2=c$, which is a contradiction.
If $u_2-v_2 \ge 2$, then, by (\ref{eqns}), $a_1+a_2<u_1+v_2\le u_1+u_2-2=c$, which is a contradiction.

Hence $u$ and $v$ are not adjacent in the niche graph of $D$.
\end{proof}

\begin{Thm}\label{thm:Cn}
For any integer $n\ge 4$,
there is a doubly partial order whose niche graph
contains $C_n$ as an induced subgraph.
\end{Thm}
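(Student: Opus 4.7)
The plan is to give an explicit construction: for each $n \ge 4$, I would exhibit a finite set $V_n \subset \mathbb{R}^2$ (satisfying the hypothesis of Lemma~\ref{lemCn} with some convenient $c$, say $c = 0$) and designate $n$ vertices of $V_n$ that induce $C_n$ in the niche graph of the corresponding doubly partial order. The role of Lemma~\ref{lemCn} is to handle the bulk of the non-adjacencies for free: any two integer points of $V_n$ that do not differ by $(1,-1)$ are automatically non-adjacent in the niche graph, eliminating most potential chord edges in one stroke.

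The cycle vertices will be chosen from a mix of integer points on $L_0 \cup L_2$ and non-integer points in the open strip $\{z \in \mathbb{R}^2 : 0 < z_1 + z_2 < 2\}$. A purely integer cycle is impossible: by Lemma~\ref{lemCn}, the only allowed integer adjacencies are between lattice-consecutive pairs on a single line, so the niche graph restricted to any set of integer points satisfying the lemma's hypothesis is a disjoint union of sub-paths of the two lattice lines. Hence at least some cycle vertices must be non-integer, serving to ``close'' the cycle between lattice-consecutive runs on $L_0$ or $L_2$.

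For each consecutive pair of cycle vertices, I would then place a mediator in $V_n$---typically a non-integer point in the open strip, and occasionally an additional integer point on $L_0 \cup L_2$---that serves as a common prey or common predator of precisely that pair. The constraint that all of $V_n$ lies in the closed strip $\{z : 0 \le z_1 + z_2 \le 2\}$ strongly limits how far the relevant cones (the down-left region defining common preys, the up-right region defining common predators) of any two cycle vertices can extend, which in turn helps to keep each mediator's effect localized to its intended pair.

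The main obstacle will be preventing interference: a point intended as a common prey of one consecutive pair $(u_i, u_{i+1})$ can easily end up also being a common prey of a neighboring pair, creating a chord, or of some cycle vertex $u_j$, making the mediator itself adjacent to unintended vertices. I expect the resolution to come from careful coordinate choices---placing the cycle vertices so that the ``down-left cones'' (for prey mediators) and ``up-right cones'' (for predator mediators) of non-consecutive pairs have empty intersection with $V_n$ within the strip. Final verification then reduces to routine coordinate comparisons for each pair of cycle vertices, with Lemma~\ref{lemCn} disposing of all integer-pair cases uniformly and the remaining cases (those involving at least one non-integer vertex) treated by direct inspection of the cones.
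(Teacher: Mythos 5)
Your proposal is a strategy outline rather than a proof: the entire mathematical content of this theorem is the explicit construction of $V_n$ and the pair-by-pair verification of adjacencies and non-adjacencies, and you supply neither. Phrases such as ``I would exhibit a finite set $V_n$'' and ``I expect the resolution to come from careful coordinate choices'' defer exactly the step where all the difficulty lies---namely, arranging the mediators so that each one creates precisely one edge of the cycle and no chords, and so that the mediators themselves do not become adjacent to cycle vertices in unintended ways. The paper resolves this with a concrete choice: the integer points $(i,k-1-i)$ and $(i+1,k-i)$ for $i=0,\dots,k-2$ on the lines $L_{k-1}$ and $L_{k+1}$, non-integer mediators at $(i\pm\frac13,\,k-i\pm\frac13)$ in the open strip, and the extra triples $X_{(0,k)}$ and $X_{(k\pm1,1)}$ to close the two lattice paths into a cycle, followed by a case-by-case check. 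Without an analogous explicit construction, the statement is not proved.

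There is also a substantive error in your reasoning. You argue that since Lemma~\ref{lemCn} forces the integer points to induce a disjoint union of subpaths of the two lines, ``at least some cycle vertices must be non-integer.'' This does not follow, and indeed it is false for the paper's construction: there the induced $C_n$ consists \emph{entirely} of integer points. The escape is not to use non-integer cycle vertices but to use integer vertices that violate the hypothesis of the lemma---the closing vertices $(0,k)$ and $(k-1,1)$ (resp.\ $(k+1,1)$, $(k+2,2)$ in the odd case) do not lie on $L_{k-1}\cup L_{k+1}$, so the lemma is applied only to the sub-configuration $W_k$ and the remaining non-adjacencies are checked directly. Your inference rules out the paper's own construction and would push you toward non-integer cycle vertices, for which Lemma~\ref{lemCn} gives no help at all with chords; such a construction might still exist, but you have given no evidence that it does, and the burden of exhibiting it is the whole theorem.
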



\begin{proof}
We construct a doubly partial order $D_n$ for each integer $n\ge 4$.
For any $(i,j)\in \mathbb{R}^2$,
let $X_{(i,j)}:=\{(i-1,j-1), (i,j), (i+1,j+1) \}$.
For an integer $k$ with $k\ge 2$, we define a finite
subset $W_k$ of $\mathbb{R}^2$ as follows:
\begin{align*}
W_k \cap \mathbb{Z}^2 &:= \{(i,k-1-i),(i+1,k-i)  \mid  i=0,1,\ldots, k-2  \} \\
W_k \setminus \mathbb{Z}^2 &:= \{ (i-\frac{1}{3},k-i-\frac{1}{3}), (i+\frac{1}{3},k-i+\frac{1}{3})
\mid  i=1,2,\ldots,k-2 \} \quad (k \ge 3)
\end{align*}
and $W_2 \setminus \mathbb{Z}^2=\emptyset$.
Let $A_k$ be the sequence
of vertices of $(W_k \cap \mathbb{Z}^2) \cup\{(0,k)\}$ listed as  follows:
\begin{align*}
&(k-2,1),(k-3,2),\ldots,(i,k-1-i),\ldots,(2,k-3),(1,k-2),(0,k-1), \tag{$*$}\\
&(0,k), (1,k),(2,k-1),\ldots,(i+1,k-i), \ldots,(k-2,3),(k-1,2).
\end{align*}

Let $G_k$ be the niche graph of a doubly partial order
associated with $X_{(0,k)} \cup W_k$.
First, we will show that the sequence $A_k$ is a path of
length $2k-2$ as an induced subgraph in $G_k$.
In $G_k$, we can easily check the following:
\begin{itemize}
\item[(i)]
For $i=0,1,\ldots,k-3$,
the vertex $(i+1+\frac{1}{3},k-1-i+\frac{1}{3})$  of $W_k \setminus \mathbb{Z}^2$
is a common predator of the $(k-1-i)$th vertex $(i,k-1-i)$ and the $(k-i)$th vertex $(i+1,k-2-i)$;
\item[(ii)]
For $i=0,1,\ldots,k-3$, the vertex
$(i+1-\frac{1}{3},k-1-i-\frac{1}{3})$ of $W_k \setminus \mathbb{Z}^2$ is a common prey
of the $(k+i+1)$st vertex $(i+1,k-i)$ and the $(k+i+2)$nd vertex $(i+2,k-1-i)$;
\item[(iii)]
The vertex $(1,k+1)$ is a common predator
of the $k$th vertex $(0,k)$ and the $(k-1)$st vertex $(0,k-1)$;
\item[(iv)]
The vertex $(-1,k-1)$ is a common prey of the $k$th vertex $(0,k)$ and the $(k+1)$st vertex $(1,k)$.
\end{itemize}
By (i) through (iv), the $i$th vertex and the $j$th vertex of  the sequence $A_k$ are adjacent in $G_k$ if $|i-j|=1$, and so $A_k$
forms a path of length $2k-2$ in $G_k$.

In addition, the sequence $A_k$ is a path of length $2k-2$  as an induced subgraph in $G_k$.
To see why, we will show that  the $i$th vertex and the $j$th vertex of $A_k$ are not adjacent in $G_k$ if  $|i-j|\ge 2$.
Take the $i$th vertex and the $j$th vertex of $A_k$ with  $|i-j|\ge 2$ and denote them by $x$ and $y$.
Suppose that $k=i$ or $j$. Then the $k$th vertex of $A_k$ is $(0,k)$ and it is easy to check that
\[\Gamma_{G_k}((0,k))=\{(1,k), (0,k-1), (-1,k-1), (1,k+1)\}.\] Since $(1,k)$ and $(0,k-1)$ are the $(k+1)$st vertex and $(k-1)$st vertex of $A_k$, respectively, and $(-1,k-1)$ and $(1,k+1)$ are not vertices of $A_k$, we conclude that $x \not\sim y$ in this case.

Suppose that $i \neq k$ and $j \neq k$. Without loss of generality, we may assume that $x_1 \le y_1$.
Note that $W_k$ satisfies that
\[W_k\cap \mathbb{Z}^2 \subseteq L_{k-1} \cup L_{k+1}
\quad \text{ and } \quad
W_k \setminus \mathbb{Z}^2 \subseteq \bigcup_{k-1< c'<k+1} L_{c'}.\]
Since $|i-j|\ge 2$,  $x_1 + 1 \not= y_1$ or $x_2-1\not=y_2$ by the definition of $A_k$.  Then, by Lemma~\ref{lemCn}, $x\not\sim y$ in the niche graph of the doubly partial order associated with $W_k$.
Therefore  $x\not\sim y$ in the subgraph of $G_k$ induced by  $W_k$.
It remains to show that $x$ and $y$ have neither a common prey nor a common predator in $X_{(0,k)}=\{(-1,k-1),(0,k),(1,k+1)\}$. The set of predators or prey of $(-1,k-1)$ in $A_k$ is $\{(0,k),(1,k)\}$.  These two vertices are $k$th and $(k+1)$st vertices of $A_k$ and so $(-1,k-1)$ cannot be a common prey or a common predator of $x$ and $y$.  The set of predators or prey of $(0,k)$ in $A_k$ is $\{(-1,k-1), (1,k+1)\}$ and so $(0,k)$ cannot be a common prey or a common predator of $x$ and $y$.  The set of predators or prey of $(1,k+1)$ in $A_k$ is $\{(0,k),(0,k-1)\}$.  These two vertices are $k$th and $(k-1)$st vertices of $A_k$ and so $(-1,k-1)$ cannot be a common prey or a common predator of $x$ and $y$.
Hence we conclude that the $i$th vertex and the $j$th vertex of $A_k$ are not adjacent in $G_k$ if  $|i-j|\ge 2$.
\begin{figure}
\begin{center}
\psfrag{d}{$(0,4)$}
\psfrag{x}{$(1,5)$}
\psfrag{y}{$(-1,3)$}
\psfrag{e}{$(1,4)$}
\psfrag{c}{$(0,3)$}
\psfrag{b}{$(1,2)$}
\psfrag{f}{$(2,3)$}
\psfrag{a}{$(2,1)$}
\psfrag{g}{$(3,2)$}
\psfrag{r}{$(2,0)$}
\psfrag{s}{$(3,1)$}
\psfrag{t}{$(4,2)$}
 { \includegraphics[width=230pt]{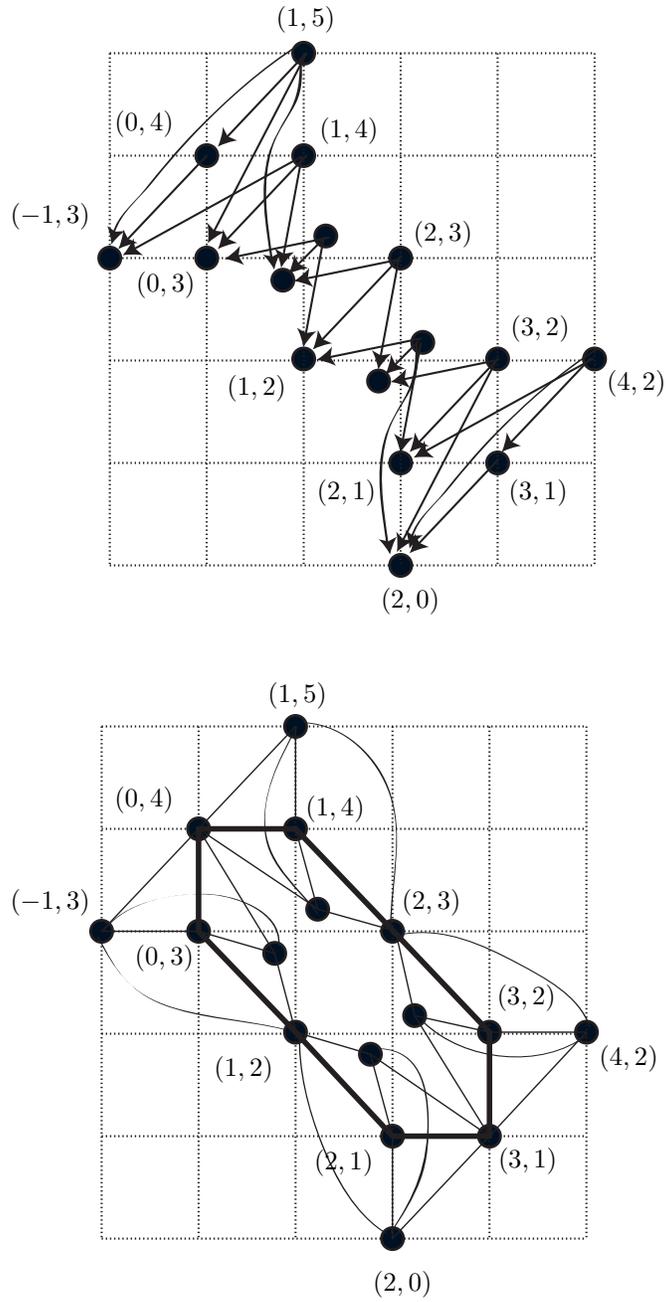} }\\
  \caption{A doubly partial order $D_8$ and the niche graph of $D_8$. Note that the thick edges form a cycle of length $8$ as an induced subgraph of the graph. }
\label{C8}
  \end{center}
\end{figure}

\begin{figure}
\begin{center}
\psfrag{d}{$(0,4)$}
\psfrag{x}{$(1,5)$}
\psfrag{y}{$(-1,3)$}
\psfrag{e}{$(1,4)$}
\psfrag{c}{$(0,3)$}
\psfrag{b}{$(1,2)$}
\psfrag{f}{$(2,3)$}
\psfrag{a}{$(2,1)$}
\psfrag{g}{$(3,2)$}
\psfrag{r}{$(4,0)$}
\psfrag{s}{$(5,1)$}
\psfrag{t}{$(6,2)$}
{\includegraphics[width=270pt]{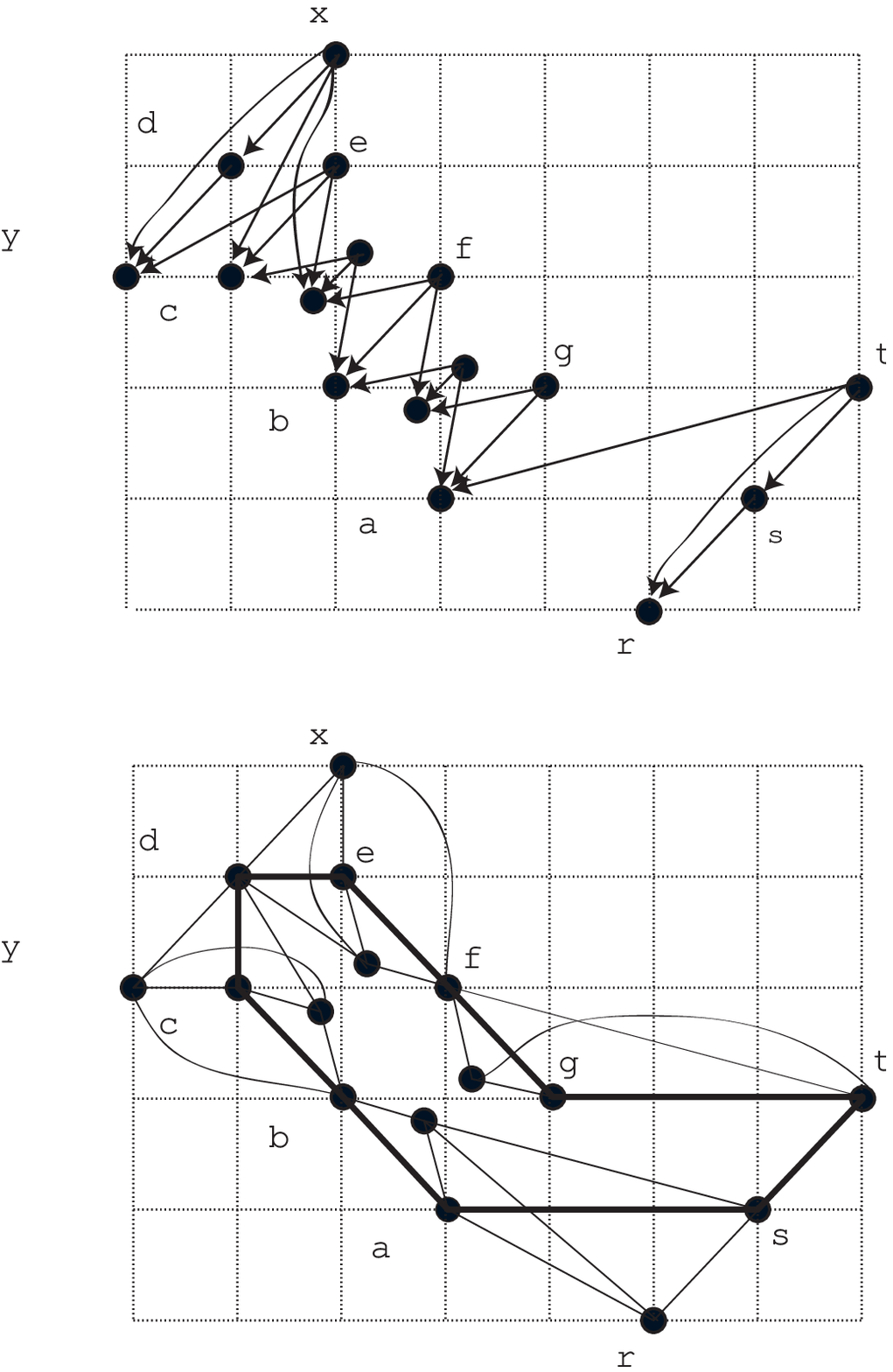} }\\
  \caption{A doubly partial order $D_9$ and the niche graph of $D_9$. The thick edges form a cycle of length $9$ as an induced subgraph of the graph.}
\label{C9}
  \end{center}
\end{figure}

Now we are ready to give a construction of  a doubly partial order $D_n$ for each integer $n\ge 4$.
Suppose that $n=2k$ for some integer $k\ge 2$.
Let \begin{align*}
V_n:&= X_{(0,k)} \cup X_{(k-1,1)} \cup W_k
\end{align*}
and $D_n$ be the doubly partial order associated with $V_n$.
We will show that the vertices of $(W_k \cap \mathbb{Z}^2) \cup \{ (0,k), (k-1,1) \}$
form $C_n$ without chord in the niche graph of $D_n$.
See Figure~\ref{C8} for an illustration. Let $N_n$ be the niche graph of $D_n$.

Note that  $X_{(k-1,1)}=\{ (k-2,0),(k-1,1),(k,2) \}$. Consider the  sequence $A_{k}$ defined in ($*$). It is not difficult to check that none of vertices in $X_{(k-1,1)}$ can be a common prey or a common predator of two vertices of $A_k$.  Thus by the previous argument, $A_k$ forms a path as an induced subgraph of $N_n$.  On the other hand,
in the niche graph $N_n$ of $D_n$, it can easily be checked that
\begin{eqnarray*}
 && \Gamma_{N_n}( (k-2,0)) =\{ (k-1,1),(k,2),(k-1,2) \};\\
 && \Gamma_{N_n}( (k,2) ) =\{ (k-1,1),(k-2,0),(k-2,1) \};\\
 && \Gamma_{N_n}( (k-1,1) )=\{ (k-2,0),(k,2),(k-2,1),(k-1,2)\}.
\end{eqnarray*}
Thus, the vertices of $A_k$ together with $(k-1,1)$ form a cycle of
length $2k=n$ as an induced subgraph.

Now we assume that $n$ is an odd integer with $n\ge 5$.
Then  $n=2k+1$ for some integer $k\ge 2$. Let
\begin{align*}
V_n:&= X_{(0,k)} \cup X_{(k+1,1)}\cup W_k
\end{align*}
and $D_n$ be the doubly partial order associated with $V_n$.  See Figure~\ref{C9} for an illustration. Note that  $X_{(k+1,1)}=\{ (k,0),(k+1,1),(k+2,2) \}$.

Consider the sequence $A_k$ defined in $(\ast)$.  Then it is not hard to check that none of vertices in $X_{(k+1,1)}$ is a common prey or a common predator of two vertices of $A_k$. Thus, by the previous argument, $A_k$ is a path as an induced subgraph of $N_n$.

It can easily be checked that
\begin{eqnarray*}
 && \Gamma_{N_n}( (k,0)) =\{ (k+1,1),(k-2,1) \};\\
 && \Gamma_{N_n}( (k+1,1) ) =\{(k,0),(k+2,2),(k-2,1) \};\\
 && \Gamma_{N_n}( (k+2,2) )=\{ (k+1,1),(k-1,2)\}.
\end{eqnarray*}
Thus the first vertex $(k-2,1)$ of $A_k$ is the only vertex in $A_k$ adjacent to $(k+1,1)$.  In addition, the $(2k-1)$st vertex $(k-1,2)$ of $A_k$ are the only vertex in $A_k$ adjacent to $(k+2,2)$. Since $(k+1,1)$ and $(k+2,2)$ are adjacent, the vertices of  sequence $A_k$ together with $(k+2,2)$ and $(k+1,1)$ form a cycle of length $2k+1=n$ as an induced subgraph.
Hence $N_n$ contains $C_n$ as an induced subgraph.
\end{proof}

Theorems~\ref{invervalChara} and ~\ref{thm:Cn} tell us
that the niche graph of a doubly partial order
is not necessarily an interval graph.
However if the niche graph of a doubly partial order
is triangle-free, then it is an interval graph.
To show that, we start with the following lemma:

\begin{Lem} \label{lem1}
Let $D$ be a doubly partial order.
Suppose that  the niche graph $G$ of $D$ is triangle-free.
Then if $x \sim y$, $y \sim z$ in $G$, and $x_1 \le z_1$,
then  $x \searrow y \searrow z$.
\end{Lem}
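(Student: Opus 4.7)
The plan is to argue by cases on the \emph{type} of witness for each of the adjacencies $x \sim y$ and $y \sim z$: every edge of the niche graph is certified either by a common prey (a vertex in the strict lower-left region) or by a common predator (a vertex in the strict upper-right region), so there are four sub-cases. In each I will use triangle-freeness of $G$ either to rule the sub-case out, or to pin down the relative positions of $x$, $y$, $z$.

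The two ``mixed'' sub-cases will be shown to be impossible. Suppose, for instance, that $b$ is a common predator of $x, y$ (so $x, y \prec b$) and that $a'$ is a common prey of $y, z$ (so $a' \prec y, z$). Then $a' \prec y \prec b$, and therefore $b$ is a common predator of every pair in $\{x, y, a'\}$; when these three vertices are distinct, this is a triangle in $G$, contradicting triangle-freeness. The degenerate collision $a' = x$ is handled by the symmetric observation that $x \prec y, z$ and $x \prec b$ exhibit $x$ as a common prey of all pairs in $\{y, z, b\}$, again a triangle. The opposite mixed case is dual.

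In the ``both common predators'' sub-case take witnesses $b$ for $xy$ and $b'$ for $yz$. If $b$ and $b'$ were $\preceq$-comparable, the larger would be a common predator of $x$ and $z$, producing the triangle $\{x, y, z\}$; hence $b$ and $b'$ are $\preceq$-incomparable, and WLOG $b_1 < b'_1$ and $b_2 > b'_2$. Triangle-freeness further excludes $x \prec b'$ and $z \prec b$, yielding $x_2 \ge b'_2$ and $z_1 \ge b_1$; combined with $y \prec b, b'$ this already gives $y_1 < b_1 \le z_1$ and $y_2 < b'_2 \le x_2$. For the remaining two inequalities, note that $y$ is a common prey of $b$ and $b'$, so $b \sim b'$; if moreover $y \prec x$ held, then $y$ would also be a common prey of $\{x, b\}$ and $\{x, b'\}$, giving the triangle $\{x, b, b'\}$. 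Since $y_2 < x_2$, ruling this out forces $y_1 \ge x_1$. A symmetric argument through $\{z, b, b'\}$ forces $y_2 \ge z_2$. Combining the four inequalities yields $x \searrow y \searrow z$. The ``both common preys'' sub-case is dual, with the roles of $b, b'$ replaced by the two common preys $a, a'$.

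The principal obstacle is the careful handling of the mixed-case degeneracies (coincidences such as $a' = x$ or $b = z$), which must be resolved by separate small triangle constructions through whichever vertices remain distinct. The rest of the argument is a chase of strict versus non-strict inequalities, whose main conceptual step is the use of triangle-freeness to force the two witnesses in each matched sub-case to be $\preceq$-incomparable.
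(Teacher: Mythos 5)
Your proof breaks down in the mixed case, and the break is not repairable by ``separate small triangle constructions,'' because the statement itself fails there. Consider your sub-case where $b$ is a common predator of $x,y$ and $a'$ is a common prey of $y,z$, and suppose both collisions occur at once: $a'=x$ and $b=z$. Then your primary triple $\{x,y,a'\}$ and your fallback triple $\{y,z,b\}$ each collapse to two vertices, and no triangle is produced. This is exactly the totally ordered configuration $x\prec y\prec z$, and it genuinely occurs in a triangle-free niche graph: for $V=\{(0,0),(1,1),(2,2)\}$ the niche graph is the path $(0,0)-(1,1)-(2,2)$ (the only witness for the first edge is the common predator $(2,2)$, the only witness for the second is the common prey $(0,0)$, and the two endpoints are non-adjacent). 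Taking $x=(0,0)$, $y=(1,1)$, $z=(2,2)$ satisfies every hypothesis of the lemma, yet $y_2\le x_2$ fails, so $x\not\searrow y$. You should not feel too bad: the paper's own proof has the same blind spot, asserting that ``$y$, $z$ and $a$ form a triangle'' without checking $a\ne z$. Any correct version must either add a hypothesis excluding $x\prec y\prec z$, or weaken the conclusion to ``$x\searrow y\searrow z$ or $x\prec y\prec z$'' and track the extra alternative through Theorem~\ref{thm:forest1}.

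Apart from this, your organization is genuinely different from, and cleaner than, the paper's: the paper disposes of the mixed cases and then runs an exhaustive case analysis on the sign patterns of $x_1-y_1$, $y_1-z_1$, $x_2-y_2$, $y_2-z_2$, manufacturing a triangle in each forbidden pattern, whereas you extract all four inequalities directly from the $\preceq$-incomparability of the two witnesses. Two smaller repairs are still needed in your both-predators case: the ``WLOG $b_1<b'_1$ and $b_2>b'_2$'' is not a symmetry --- the opposite orientation must be excluded by observing that it forces $z_1<b'_1\le x_1$, contradicting $x_1\le z_1$ --- and the triangles $\{x,b,b'\}$ and $\{z,b,b'\}$ require the (easily supplied but unstated) checks $x\ne b'$ and $z\ne b$, as well as $x\ne z$ for the triangle $\{x,y,z\}$.
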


\begin{proof}
Since  $x \sim y$ and $y \sim z$ in $G$,
there are vertices $a$ and $b$
such that either $a \prec x, y$ or $x ,y\prec a$
and either $b\prec y,z$ or $y,z \prec b$.
Suppose that $a\prec x, y$ and $y,z\prec b$.
Then $a \prec y \prec b$ and so $a\prec b$.
Therefore  $a$ is a common prey of $x$, $y$, and $b$,
and so $x$, $y$ and $b$ form a triangle in $G$,
which is a contradiction.
Similarly, if $ x, y \prec a$ and $b\prec y, z$,
then we reach a contradiction.
Hence either
(1) $a\prec x, y$ and  $b\prec y, z$,
or (2) $x,y \prec a$ and $y,z\prec b$.
In each case,
we show that $x_1 \le y_1 \le z_1$. To show by contradiction, we consider two subcases (A) $x_1>y_1$ and (B) $y_1>z_1$ in each case.

\noindent
{\it Case 1.}
$a\prec x, y$ and $b\prec y, z$.


\noindent
{\it Subcase A.}
$y_1 < x_1$.

If $z_2 \le x_2$,
then $b_1< y_1 < x_1$ and $b_2< z_2 \le x_2$
which imply that $b\prec x$.
Then $b\prec x,y,z$
and so $x$, $y$, and $z$ form a triangle in $G$, which is a contradiction.
If $z_2 > x_2$,
then $a_1< y_1 \le x_1 \le z_1$ and $a_2< x_2 < z_2$
which imply that $a\prec z$.
Then $a \prec x,y,z$
and so $x$, $y$, and $z$ form a triangle in $G$, which is a contradiction.

\noindent
{\it Subcase B.} $z_1< y_1$.

If $x_2<y_2$,
then $x \prec y$
and so $x,a,b\prec y$.
Now suppose that $ y_2 \le x_2$ and $ y_2\le z_2$.
If $x_1 \le z_1$,
then $a_1<  x_1  \le  z_1 $ and $a_2< y_2 \le z_2 $,
which imply that $a\prec z$.
Then $a\prec x,y,z$
and so $x$, $y$, and $z$ form a triangle in $G$,
which is a contradiction.
If $z_2<y_2$,
then $z \prec y$
and so $z,a,b \prec y$.
Now suppose that $ y_2 \le x_2$ and $ y_2\le z_2$.
If $z_1 < x_1$,
then $b_1< z_1 < x_1 $ and $b_2< y_2 \le x_2 $,
which imply that $b\prec x$.
Then $b \prec x,y,z$
and so $x$, $y$, and $z$ form a triangle in $G$,
which is a contradiction.

\noindent
{\it Case 2.}
$x,y \prec a$ and $y,z\prec b$.


\noindent
{\it Subcase A.} $y_1<x_1$.

If $y_2 < x_2$,
then $y \prec x$
and so $y \prec x,a,b$.
Then $x$, $a$, and $b$
form a triangle, which is a contradiction.
If $y_2 < z_2$,
then $y \prec z$
and so $y \prec z,a,b$.
Then $z$, $a$, and $b$
form a triangle, which is a contradiction.
Now suppose that $x_2 \le y_2$ and $z_2 \le y_2$.
If $x_1 \le z_1$,
then $x_1  \le  z_1 < b_1$ and $x_2 \le y_2 < b_2$,
which imply that $x\prec b$.
Then $x,y,z\prec b$
and so $x,y$ and $z$ form a triangle in $G$,
which is a contradiction.
If $z_1 < x_1$,
then $z_1 < x_1 < a_1$ and $z_2 \le y_2 < a_2$,
which imply that $z\prec a$.
Then $x,y,z\prec a$
and so $x,y$ and $z$ form a triangle in $G$,
which is a contradiction.

\noindent
{\it Subcase B.} $z_1<y_1$.

If $x_2 < z_2$,
then $x_1 \le y_1 < b_1$ and $x_2< z_2 \le b_2$
which imply that $x\prec b$.
Then $ x,y,z\prec b$
and so $x$, $y$, and $z$ form  a triangle in $G$,
which is a contradiction.
If $x_2 \ge z_2$,
then $z_1 \le y_1 <a_1$ and $ z_2 \le x_2  <a_2$
which imply that $z\prec a$.
Then $ x,y,z\prec a$
and so $x$, $y$, and $z$ form  a triangle in $G$,
which is a contradiction.
\smallskip

Thus we can conclude that $x_1\le y_1 \le z_1$ in each case.
In addition, it cannot happen $x_1=y_1=z_1$.
To see why,
let $c$ be an element of $\{a,b\}$ with smallest second component
and $d$ be the element of $\{a,b\} \setminus \{c\}$.
Suppose that $a\prec x,y$ and $b\prec y,z$.
Since $x_1=y_1=z_1$, we have $c \prec x,y,z$ and so
$x$, $y$, and $z$ form a triangle.
Similarly, if $ x,y\prec a$ and $y,z \prec b$,
then $x,y,z \prec d$
and so $x,y,z$ create a triangle.
Therefore
it holds that (1) $x_1=y_1<z_1$, (2) $x_1<y_1=z_1$, or (3) $x_1<y_1<z_1$.
In the following, we show that $x_2 \ge y_2 \ge z_2$
in these three cases.
\smallskip

\noindent
\textit{Case 1.}
$x_1=y_1<z_1$

Suppose that $x_2<y_2$.
If $ x,y\prec a$ and $y,z \prec b$,
then $x,y,z \prec b$.
If $a\prec x,y$ and $b\prec y,z$,
and $z_2<x_2$, then $b\prec x,y,z$.
If $a\prec x,y$ and $b\prec y,z$,
and $z_2\ge x_2$, then $a\prec x,y,z$.
Therefore we reach a contradiction, and so it must hold that $x_2\ge y_2$.
Suppose that $y_2 < z_2$.
If $a\prec x,y$ and $b\prec y,z$,
then, since $b_1<y_1=x_1$ and $b_2<y_2\le x_2$,
we have $b\prec x,y,z$.
If $x,y\prec a$ and $y,z \prec b$,
then, since $y \prec a,b$ and $y \prec z$, we have $y\prec a,b,z$.
Therefore we reach a contradiction,
and so it must hold that $y_2\ge z_2$.
Thus $x_2\ge y_2 \ge z_2$.
\smallskip

\noindent
\textit{Case 2.}
$x_1<y_1=z_1$.

Suppose that $y_2<z_2$.
If $a\prec x,y$ and $b\prec y,z$, then $a\prec x,y,z$.
If $ x,y\prec a$ and $y,z \prec b$ and $z_2\ge x_2$,
then $ x,y,z\prec b$.
If $ x,y\prec a$ and $y,z \prec b$ and $z_2< x_2$, then $x,y,z\prec a$.
Therefore we reach a contradiction, and so it must hold that
$y_2\ge z_2$.
Suppose that $x_2 <y_2$.
If $ x,y\prec a$ and $y,z \prec b$,
then, since $z_1=y_1<a_1$ and $ z_2 \le y_2<a_2$,
we have $x,y,z\prec a$.
If $a \prec x,y$ and $b \prec y,z$,
then, since $a,b \prec y$ and $x \prec y$,
we have $x,a,b\prec y$.
Therefore we reach a contradiction, and so
it must hold that $x_2\ge y_2$.
Thus $x_2\ge y_2 \ge z_2$.

\noindent
\textit{Case 3.}
$x_1<y_1<z_1$.

Suppose that $x_2 < y_2$.
Then $x \prec y$.
If $a\prec x, y$ and  $b\prec y, z$,
then $a,x,b \prec y$.
If $x,y \prec a$ and $y,z\prec b$, then $x,y,z \prec b$.
Therefore we reach a contradiction,
and so $x_2 \ge y_2$.
Suppose that $y_2 < z_2$.
Then $y \prec z$.
If $a\prec x, y$ and  $b\prec y, z$,
then $a, b, y \prec z$.
If $x,y \prec a$ and $y,z\prec b$, then $y \prec a, b, z$.
Therefore we reach a contradiction,
and so $y_2 \ge z_2$.
Thus $x_2\ge y_2 \ge z_2$.

Hence we conclude that $x_2 \ge y_2 \ge z_2$
and so $x \searrow y \searrow z$.
\end{proof}
\begin{Thm}\label{thm:forest1}
Let $D$ be a doubly partial order.
Suppose that the niche graph of $D$ is a triangle-free graph.
Then each component of the niche graph of $D$ is a path.
\end{Thm}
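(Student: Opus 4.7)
The plan is to deduce from Lemma~\ref{lem1} that every vertex of the niche graph $G$ has degree at most $2$ and that $G$ contains no cycle; since $G$ is triangle-free, these two facts together force each component to be a path.

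For the degree bound, I would suppose toward a contradiction that some vertex $y$ has three distinct neighbors, relabelled as $u_1, u_2, u_3$ so that $(u_1)_1 \le (u_2)_1 \le (u_3)_1$. Two applications of Lemma~\ref{lem1}, one to $u_1 \sim y \sim u_2$ and one to $u_2 \sim y \sim u_3$, yield $u_1 \searrow y \searrow u_2$ and $u_2 \searrow y \searrow u_3$ respectively. Extracting $y \searrow u_2$ from the first conclusion and $u_2 \searrow y$ from the second, antisymmetry of $\searrow$ then forces $u_2 = y$, contradicting the distinctness of vertices. Hence every component of $G$ is either a path or a cycle.

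To rule out cycles, I would suppose $v_1, v_2, \ldots, v_k, v_1$ is a cycle of length $k$ in $G$; triangle-freeness gives $k \ge 4$. Applying Lemma~\ref{lem1} at $v_1$ to its two cycle-neighbors $v_k$ and $v_2$ gives either $v_1 \searrow v_2$ or $v_1 \searrow v_k$, and after reversing the labelling of the cycle if necessary I may assume $v_1 \searrow v_2$. I would then propagate the orientation by induction on $i$: assuming $v_{i-1} \searrow v_i$, the application of Lemma~\ref{lem1} to $v_{i-1} \sim v_i \sim v_{i+1}$ must give $v_{i-1} \searrow v_i \searrow v_{i+1}$, because the reverse first-coordinate ordering would produce $v_i \searrow v_{i-1}$ and, against the inductive hypothesis, collapse $v_{i-1}$ with $v_i$ via antisymmetry. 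Iterating around the cycle yields $v_k \searrow v_1$ as well, and then transitivity of $\searrow$ gives $v_1 \searrow v_k$; antisymmetry against $v_k \searrow v_1$ forces $v_1 = v_k$, the desired contradiction.

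The main obstacle is the inductive propagation of orientation around the cycle: at each $v_i$ I must rule out the ``wrong'' orientation that Lemma~\ref{lem1} might otherwise allow. The key is that once $v_{i-1} \searrow v_i$ has been established, antisymmetry of $\searrow$ becomes a barrier that prevents Lemma~\ref{lem1} from placing $v_{i+1}$ on the same side of $v_i$ as $v_{i-1}$. Apart from this directional bookkeeping, the rest of the argument follows almost mechanically from Lemma~\ref{lem1} together with the transitivity and antisymmetry of $\searrow$.
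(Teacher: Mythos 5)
Your proposal is correct, and it follows the paper's overall plan: reduce the theorem to the two facts that the niche graph $G$ has no cycle and that every vertex of $G$ has degree at most $2$, with Lemma~\ref{lem1} together with the antisymmetry of $\searrow$ doing all the work. Your degree argument is, up to notation, identical to the paper's. Where you genuinely diverge is in ruling out cycles. The paper argues locally at an extremal vertex: it takes the vertex $x$ of the cycle whose first coordinate is minimum, applies Lemma~\ref{lem1} to the two length-two paths $x\,w\,u$ and $x\,y\,z$ emanating from $x$ along the cycle (minimality of $x_1$ supplies the hypothesis $x_1\le u_1$ and $x_1\le z_1$) to obtain $x\searrow w$ and $x\searrow y$, and then one further application to the path $w\,x\,y$ yields $w\searrow x$ or $y\searrow x$ and collapses $x$ with one of its neighbours. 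You instead propagate an orientation $v_1\searrow v_2\searrow\cdots\searrow v_k$ around the entire cycle by induction, using antisymmetry of $\searrow$ to exclude the wrong orientation at each step, and close the loop with transitivity. Both arguments are sound; the paper's needs only three applications of the lemma and never invokes transitivity, while yours is longer but more systematic and makes explicit that $\searrow$ totally orders each path of $G$. One small point to make explicit in a final write-up: Lemma~\ref{lem1} is really a statement about three pairwise distinct vertices (its proof produces triangles on $x$, $y$, $z$), so you should record that consecutive triples on a cycle of length at least $4$, and the triples $u_1,y,u_2$ and $u_2,y,u_3$ in the degree argument, are indeed distinct.
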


\begin{proof}
Let $G$ be the niche graph of a doubly partial order $D$.
First, we will show that  $G$ is a forest.
Suppose that there is a cycle $C$ of length $n$.
 We may assume that $x$ is a vertex  such that its  first component $x_1$
is the minimum among those of vertices of $C$.
Since $G$ is triangle-free, $n\ge 4$ and so
there exist  $4$ distinct  vertices $x,y,z,w$ such that $x\sim y$,  $y \sim z$, $w \sim x$.
Let $u$ be the vertex of $C$ such that $u\sim w$ and $u\not=x$. 
By the choice of $x$, $x_1 \le u_{1}$ and $x_1 \le z_{1}$.
Then, since $xwu$ and $xyz$ are paths in $G$, $x\searrow w$ and $x \searrow y$ by Lemma \ref{lem1}.
If $y_1 \ge  w_1$, then, by Lemma \ref{lem1}, $w  \searrow x$,
which implies that $x=w$.
If $y_1 < w_1$, then $y \searrow x$,
which implies that $y=x$.
Thus we reach a contradiction in either case. Hence $G$ is a forest.

In the following, we will show that  $\deg_{G}(v) \le 2$ for any vertex $v$.
Suppose that there is a vertex $u$ such that $\deg_{G}(u) \ge 3$.
Let $x$, $y$ and $z$ be three distinct neighbors of $u$.
Without loss of generality, we may assume that $x_1\le y_1 \le z_1$.
Since $xuy$ and $yuz$ are paths in $G$,
$x \searrow u \searrow y$ and $y \searrow u \searrow z$
by Lemma \ref{lem1}.
Then $u \searrow y$ and $y \searrow u$ and so $y=u$,
which is a contradiction.
Hence each component of the niche graph of $D$ is a path.
\end{proof}

By Theorem~\ref{invervalChara} and Theorem~\ref{thm:forest1},
the following theorem holds.

\begin{Thm} \label{thm:interval}
The niche graph of a doubly partial order is an interval graph
unless it contains a triangle.
\end{Thm}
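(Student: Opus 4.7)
The plan is to deduce Theorem \ref{thm:interval} as an immediate consequence of Theorem \ref{thm:forest1} together with the observation that a disjoint union of paths is always an interval graph. So the proof reduces to two short steps.

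First, I would invoke Theorem \ref{thm:forest1}: if the niche graph $G$ of the doubly partial order $D$ is triangle-free, then each connected component of $G$ is a path. This is the main content and has already been established.

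Second, I would verify that such a graph is an interval graph. The cleanest route is via Theorem \ref{invervalChara}: one checks that a disjoint union of paths is chordal (any induced cycle would need length at least $3$, but a path has no induced cycles at all) and contains no asteroidal triple (in any component that is a path, the neighborhood of any vertex separates the component into at most two pieces, so among any three vertices, the middle one on the path blocks every path between the other two; in different components the triple is trivially not asteroidal since no paths connect the components at all). Alternatively, one can exhibit an explicit interval representation: for each component, which is a path $v_1 v_2 \cdots v_m$, assign $J(v_i) := [i, i+1]$, and use disjoint intervals in disjoint regions of $\mathbb{R}$ for different components. Either verification is routine.

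I do not anticipate any real obstacle; the work has been concentrated in Lemma \ref{lem1} and Theorem \ref{thm:forest1}, and the final theorem is essentially a packaging statement. The only point to be careful about is making clear that the contrapositive direction is vacuous: the theorem only claims interval-ness under the triangle-free hypothesis, and the construction in Theorem \ref{thm:Cn} shows that this hypothesis cannot be dropped, since the $C_n$'s produced there (for $n \geq 4$) are induced subgraphs of niche graphs of doubly partial orders, preventing those niche graphs from being chordal.
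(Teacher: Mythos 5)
Your proposal is correct and follows essentially the same route as the paper, which likewise derives the theorem immediately from Theorem~\ref{thm:forest1} together with Theorem~\ref{invervalChara}; your explicit verification that a disjoint union of paths is an interval graph just fills in a step the paper leaves implicit.
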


\section{Concluding remarks}

We have shown that the niche graph of a doubly partial order
is not necessarily an interval graph by constructing a
doubly partial order whose niche graph contains a cycle
an induced subgraph for each integer $n \ge 4$.  Then we tried to find a doubly partial order such that its niche graph does not contain a cycle of length at least $4$ as an induced subgraph and it is not an interval graph, but  in vain. Accordingly, we would like to ask whether or not such a doubly partial order exists.

Eventually, it remains open to characterize doubly partial orders whose niche graphs are interval graphs.




\begin{thebibliography}{99}%

\bibitem{cable}
C. Cable, K. F. Jones, J. R. Lundgren, and S. Seager:
Niche graphs,
\emph{Discrete Appl. Math.}, {\bf 23} (1989) 231--241.


\bibitem{chokim}
H. H. Cho and S. -R.\ Kim:
A class of acyclic digraphs with interval competition graphs,
\emph{Discrete Appl. Math.}, {\bf 148} (2005) 171--180.

\bibitem {cohen1}
J. E. Cohen:
\emph{Interval graphs and food webs. A finding and a problem},
RAND Corporation Document 17696-PR, Santa Monica, California, 1968.

\bibitem {cohen2}
J. E. Cohen:
\emph{Food webs and niche Space},
Princeton University Press, Princeton, N.J., 1978.
%
\bibitem{fi}
D. C. Fisher, J. R. Lundgren, S. K. Merz, and K. B. Reid:
The domination and competition graphs of a tournament,
{\em J. of Graph Theory}, {\bf 29} (1998) 103--110.

\bibitem{Fraughnaugh}
K. F. Fraughnaugh, J. R. Lundgren, J. S. Maybee,
S. K. Merz, and N. J. Pullman:
Competition graphs of strongly connected and hamiltonian digraphs,
\emph{SIAM J. Discrete Math.}, \textbf{8} (1995) 179--185.


\bibitem{lb}
C. G. Lekkerkerker and J. Ch. Boland:
Representation of a finite graph by a set of intervals on the real line,
\emph{Fund.\ Math.}, {\bf 51} (1962) 45--64.


\bibitem{SJkim}
S. -J. Kim, S. -R. Kim, and Y. Rho:
On CCE graphs of doubly partial orders,
{\it Discrete Appl. Math.}, \textbf{155} (2007) 971--978.
%
\bibitem{Kim93}
S. -R. Kim:
The competition number and its variants,
in J. Gimbel, J.W. Kennedy, and L.V. Quintas (eds.),
\emph{Quo Vadis Graph Theory?},
\emph{Ann.\ Discrete Math.}, Vol. \textbf{55}
(1993), 313--325.

\bibitem {kimrob}
S. -R. Kim and F. S. Roberts:
Competition graphs of semiorders and the conditions $C(p)$ and $C^{*}(p)$,
\emph{Ars Combin.}, \textbf{63} (2002) 161--173.


\bibitem{Lundgren89}
J. R. Lundgren:
Food webs, competition graphs,
competition-common enemy graphs, and niche graphs,
in F. S. Roberts (ed.),
\emph{Applications of Combinatorics and Graph Theory in the
Biological and Social Sciences},
IMA Volumes in Mathematics and its Applications,
Vol. \textbf{17}, Springer-Verlag, New York,
1989, 221--243.

\bibitem {lunmayras}
J. R. Lundgren, J. S. Maybee, and C. W. Rasmussen: Interval competition graphs of symmetric digraphs,
\emph{Discrete Math.}, \textbf{119} (1993) 113--122.

\bibitem {RayRob}
A. Raychaudhuri and F. S. Roberts:
Generalized competition
graphs and their applications, in P. Br\"{u}cker and R. Pauly
(eds.), \emph{Methods of Operations Research}, Vol. \textbf{49},
Anton Hain, K\"{o}nigstein, West Germany, 1985, 295--311.
%
\bibitem {Rob78}
F. S. Roberts:
Food webs, competition graphs, and the boxicity
of ecological phase space, in Y. Alavi and D. Lick (eds.),
\emph{Theory and Applications of Graphs}, Springer-Verlag, New
York (1978), 477--490.

\bibitem {Bolyai}
F. S. Roberts: Competition graphs and phylogeny graphs, in L.
Lovasz (ed.),
\emph{Graph Theory and Combinatorial Biology},
Bolyai Mathematical Studies, Vol. \textbf{7}, J. Bolyai
Mathematical Society, Budapest (1999), 333--362.



%

\bibitem{sc}
D. Scott:
The Competition-common enemy graph of a digraph,
{\em Discrete Appl. Math.}, {\bf 17} (1987) 269--280.


\end{thebibliography}
\end{document}